\newcommand{\pr}[1]{\mathrm{P}\left(#1\right)}
\newcommand{\grph}{\mathcal{G}}
\newcommand{\nus}{\nu\nu}
\newcommand{\gk}{\kappa}
\newcommand{\gks}{\gk\gk}
\newcommand{\bA}{\bm{A}}
\newcommand{\bJ}{\bm{E}}
\newcommand{\bF}{\bm{F}}
\newcommand{\bI}{\bm{I}}
\newcommand{\bG}{\bm{G}}
\newcommand{\bP}{\bm{P}}
\newcommand{\bN}{\bm{N}}
\newcommand{\bM}{\bm{M}}
\newcommand{\bz}{\bm{0}}
\newcommand{\bPsi}{\bm{\Psi}}
\newcommand{\bPhi}{\bm{\Phi}}
\newcommand{\Rp}{\mathbb{R}_{_+}}
\newcommand{\stspc}{\mathcal{S}}
\newcommand{\tofrom}{\leftrightarrow}
\def\bc{\begin{center}}
\def\ec{\end{center}}
\def\noi{\noindent}
\DeclareDocumentCommand{\set}{m o}
  {\left\{#1\IfNoValueTF{#2}{}{\,:\,#2}\right\}}
\DeclareDocumentCommand{\pr}{m o}{\mathrm{P}\!
  \IfNoValueTF{#2}{}{\left.}\left\{#1\IfNoValueTF{#2}{}{\,\right|#2}\right\}}
\newtheorem{thm}{Theorem}[section]
\newtheorem{prop}{Proposition}[section]
\newtheorem{theorem}{Theorem}[section]
\newtheorem{lemma}[thm]{Lemma}
\newtheorem{defn}[theorem]{Definition}
\begin{document}
\setcounter{page}{1}
%\newcounter{equation}[section]
\newtheorem{t1}{Theorem}[section]
\newtheorem{d1}{Definition}[section]
\newtheorem{c1}{Corollary}[section]
\newtheorem{l1}{Lemma}[section]
\newtheorem{r1}{Remark}[section]

%\maketitle % Insert title

%\thispagestyle{fancy} % All pages have headers and footers

\thispagestyle{empty}
\markboth{}{}

\pagestyle{myheadings}
%\markboth{Journal, Indian Statistical Association}{First Passage Moments of SMPs}

\date{}

%\maketitle

%\noi  Journal of the Indian Statistical Association\\
%Vol. 59 No. 1, 2020,   

\vspace{.1in} 

{\baselineskip 20truept

\bc
{\Large {\bf First Passage Moments of Finite-State Semi-Markov Processes}} %\footnote{Received : October  2019}
\ec

\vspace{.1in} 

\bc
{\large {\bf Richard L. Warr}} \\
{\large {\it Brigham Young University}} \\
{\large {\bf James D. Cordeiro}}\\
{\large {\it University of Dayton}}
\ec 
}

\vspace{.1in}
\baselineskip 18truept

%----------------------------------------------------------------------------------------
%	ABSTRACT
%----------------------------------------------------------------------------------------

\begin{abstract}
In this paper, we discuss the computation of first-passage moments of 
a time-homogeneous semi-Markov process (SMP) with finite state space 
to certain of its states that possess the property of universal accessibility 
(UA). A UA state is one which is accessible from any other state of the SMP, 
but which may or may not connect back to one or more other states. An important 
characteristic of UA is that it is the state-level version of the oft-invoked 
process-level property of irreducibility. We adapt existing results for 
irreducible SMPs to the derivation of an analytical matrix expression for the 
first passage moments to a single UA state of the SMP. In addition, consistent 
estimators for these first passage moments are given.
\end{abstract}

%----------------------------------------------------------------------------------------
%	ARTICLE CONTENTS
%----------------------------------------------------------------------------------------

%\begin{multicols}{2} % Two-column layout throughout the main article text

\section{Introduction}
The first passage distribution of a discrete-state stochastic model such
as a Markov or semi-Markov process is a fundamental quantity of interest.
% First passage distributions are primarily found
% numerically.  First passage moments are an even more fundamental element and needed
% First passage moments are not accurately computed using numeric first passage
% distributions.  The work \cite{hunter2007simple} of ??? and ??? map how to find
% First passage moments for .
%The primary effort for first passage moments has been with regular Markov processes.  
Results for the first passage distribution of a regular Markov process, which is
commonly defined as irreducible and aperiodic, are detailed in the classical
monograph by Kemeny and Snell \cite{kemeny_snell} and were subsequently 
developed by Hunter \cite{hunter2007simple,hunter2018}. Nevertheless, many 
multi-state models that arise in practice use reducible models. One
of the early examples of a SMP model that incorporates transient and absorbing
states comes from clinical management science \cite{weiss1965semi}. Recent
applications from reliability and survival analysis that require transient and
absorbing states include a nuclear pipeline rupture model of Veeramany, et al.
\cite{veeramany2011reliability} 
and the reliability-risk model for credit score
ratings of D'Amico, et al. \cite{DAmico2006}. In such applications,
absorbing states are commonly used to indicate failure while
transient states arise from degradation levels that may only be visited once.

% The first passage distribution of an SMP is
% computed on the basis of its embedded discrete time Markov chain (DTMC).
%with the acknowledgment that the essence of the problem focuses
% Methods detail a method to find the first passage moments for any Markov process (regardless of it regularity),
% on a state by state basis.  The only requirement of the state is that it is reachable from every other state in the process.
% If one is interested finding the first passage moments 

Modeling applicability of multistate jump models may be further extended
by considering the semi-Markov process (SMP), which generalizes
the continuous-time Markov chain. 
%\lettrine[nindent=0em,lines=3]{S}Since the seminal works of Levy 
Since the seminal works of Levy 
\cite{levy_2_54,levy_1_54} and Smith \cite{smith_55}, semi-Markov processes 
(SMPs) have been utilized as a framework for a wide variety of applications 
within the scientific literature. Much of the interest is due to the fact that 
the SMP relaxes the assumption of exponential sojourn times, which is not 
always appropriate, while retaining the memoryless characteristic of a Markov 
chain at the transition epochs. An area of study most frequently associated 
with SMPs is that of survival analysis and reliability, for which the 
definitive reference is \cite{barlow_prosch65}, and which has been continued by 
the likes of \cite{barbu_etal_04,StocNetModels,SemiMarkov} and others. Of 
special note are the areas of semi-Markov decision processes and 
$PH$-distributions \cite{khar_solo_uluk10,younes_simm04}, often used in 
reliability, but which also appear in the context of SMP first passage moments, 
as in \cite{SMPFirstPass}. Numerical algorithms for the efficient calculation
of first-passage moments have also been studied (e.g., \cite{hunter2016accurate}), and important developments
since \cite{kemeny_snell} are presented in the survey paper by Hunter
\cite{hunter2018}. Other areas that have seen the application of SMP 
models are DNA analysis \cite{barbu}, queueing theory \cite{Kleinrock:1,
neuts_mg1_89}, finance \cite{Janssen:2}, artificial intelligence 
\cite{younes_simm04}, and transportation \cite{brum_etal01,lerman79}, to name 
but a few. 

Explicit time-domain formulas for the first two moments of the first passage 
distribution of a irreducible ergodic SMP with a finite state space have long 
been known. Pyke \cite{pyke1,pyke2} inverted Laplace-Stieltjes transform 
matrices under restrictive non-singularity conditions in order to derive the 
first and second moments. Hunter \cite{hunter1969moments} repeated this 
analysis by means of Markov renewal theory, and then solved for the matrix of 
first passage moments $\bM$ of the SMP through multiplication of the matrix $\bI-\bP$ 
by a generalized inverse, where $\bP$ is the irreducible transition probability matrix of 
the embedded discrete time Markov chain. 
Hunter \cite{hunter2007simple} further demonstrates that $\bm{M}$ may be found using any
generalized inverse $\bm{G}$ for $\bm{I}-\bm{P}$. This criterion directly relates to the main result of this
paper via the action of a specific generalized inverse on the irreducible partition of the
transition matrix of the embedded DTMC.
Although the role of the fundamental 
matrix of the embedded DTMC in solving the problem of finding the first passage 
moments was recognized since at least Kemeny and Snell \cite{kemeny_snell}, it 
was Hunter \cite{hunter1969moments} that recognized its importance by proving 
that the fundamental matrix is a particular generalized inverse for $\bI-\bP$. Some 
years later, Yao \cite{MPFirstPass} was able to use a generalized inverse to 
find \emph{all} moments of first passage for irreducible processes. Zhang and Hou \cite{SMPFirstPass} 
likewise employed a generalized inverse method in order to derive exact first 
passage moments for SMPs with phase- ($PH$-)distributed sojourn times between 
states, thus capitalizing on the robust interest in the reliability community 
for these somewhat exponential-like statistical distributions.

In this article, we derive the formulae for first passage moments in SMPs with finite state space.  These processes can be reducible and/or periodic, the only restriction we impose is that the final state of interest in the first passage is \emph{universally accessible} (UA).  UA states are those that are accessible from each and every other state of the SMP. It turns out that 
the UA-focused method is less involved than earlier methods which, in addition 
to requiring the g-inverse-rendered fundamental matrix, also involves 
the simultaneous computation of \emph{all} first passage times in an 
irreducible process. The proposed approach requires a generous application of linear 
algebra, namely the Perron-Frobenius theorem generalized to reducible matrices 
(and hence reducible processes) in order to arrive at the existence of the 
reduced fundamental matrix. For further details on the Perron-Frobenius 
theorem, and spectral theory in general, see \cite{fiedler2008special}.

The remainder of the paper will proceed as follows.  In Section 2, we define
notation, terminology, and assumptions that guide the remainder of the 
discourse. In section 3, we discuss known results for irreducible processes, 
and hence essential classes in a general SMP. We then present the main result 
in Section 4, which is the derivation of the formula for the first passage 
moments under the condition of universal accessibility. Finally, in Section 5, 
we present a method for estimating the first passage moments of SMPs and a 
brief example.

%------------------------------------------------

\section{Notation and Basic Definitions}

%%%%%%% Notation %%%%%%%%%

In this section we introduce the the notation used in this paper. A boldface 
symbol without indices refers to a matrix (e.g., $\textbf{F}(t)$ is a
matrix with elements $F_{_{ij}}(t)$ in the $i$th row and $j$th column).
We will sometimes drop the function argument for simplicity's sake; e.g.,
$\textbf{F}=\textbf{F}(t)$. In the usual way, we define the Dirac-$\delta$
function as
\begin{equation*}
  \delta_{_{ij}} = \left\{
  \begin{array}{l l}
    0 & \quad \text{if $i \neq j$} \\
    1 & \quad \text{if $i=j$}\\
  \end{array} \right.
\end{equation*}
In addition, we will specify that the $k$-dimensional square matrices
$\bI$ and $\bJ$ denote the identity matrix and the matrix whose
entries consist of ones, respectively. Finally, the matrix binary operator
`$\circ$' denotes Hadamard (element-wise) multiplication; i.e.
\[\big[\textbf{A}\circ\textbf{B}\big]_{_{ij}}=a_{_{ij}}b_{_{ij}}.\]

%%%%%%% SMP Def %%%%%%%%%

We introduce semi-Markov processes roughly following the development in \cite{hunter2016accurate}.  Consider a Markov renewal process (MRP) $\{(X_n,T_n),n \geq 0\}$ with finite state space $\stspc=\{1,2,\dots,k\}$ and semi-Markov kernel $\bm{Q}(t)=[Q_{_{ij}}(t)]$, where 
\[Q_{_{ij}}(t)= Pr(X_{n+1}=j, T_{n+1}-T_n \leq t|X_n=i) \]
and $i,j \in \stspc$.  $X_n$ represents the state of the process after the $n^{\text{th}}$ transition and $T_n$ is the time at which the $n^{\text{th}}$ transition occurred.  

Define $\bP$ to be the matrix of transition probabilities composed of
\[p_{_{ij}} \equiv Q_{_{ij}}(+\infty) = Pr(X_{n+1}=j|X_n=i). \]
Also let $\bm{F}(t)$ be the matrix of cumulative distribution functions (CDFs)
\[F_{_{ij}}(t) \equiv Pr(T_{n+1}-T_n \leq t | X_n = i, X_{n+1} = j), \] thus $Q_{_{ij}}(t) = p_{_{ij}}F_{_{ij}}(t)$ or $\bm{Q} \equiv \bP\circ\bF$.  Additionally, define
\[ n_{_{ij}}^{(r)} \equiv \int_{0}^{\infty} t^r dF_{_{ij}}(t)\]
for $r \in \mathbb{N}$.  $n_{_{ij}}^{(r)}$ is the $r^{\text{th}}$ moment of the process sojourn time in state $i$ when transitioning next to state $j$.  Finally, let $\bm{N}^{(r)} \equiv [n_{_{ij}}^{(r)}]$, with $\bm{N} \equiv \bm{N}^{(1)}$ and $n_{_{ij}}^{(1)} \equiv n_{_{ij}}$.  In this article we require $0 < n_{_{ij}} < \infty $ for all $i$ and $j$, when $p_{ij} > 0$. 

Let $\zeta \equiv \{X(t):t\geq 0\}$, where $X(t) \equiv X_n$ for $t \in [T_n,T_{n+1})$.  Thus $\zeta$ is a regular time-homogeneous SMP as a consequence of the above definitions. %with a finite
%state space $\stspc=\{1,2,\dots,m\}$ and kernel matrix %$\bQ(x)=[Q_{_{ij}}(x)]$. 

%The transition matrix of the embedded discrete time Markov chain (DTMC) is thus 
%given by $\bp=\bQ(\infty)$. In addition, we define the matrix of distribution 
%functions $\bF(t)=[F_{_{ij}}(x)]$ of the sojourn times in state $i$, given that 
%the process transitions to state $j$ as
%\begin{equation}\label{eq:f_ij(x)}
%    F_{_{ij}}(x)=\pr{S_1\leq x}[Z_0=i,\,Z_1=j],
%\end{equation}
%with associated $r$th moments
%\[\be^{(r)}=[e^{(r)}_{_{ij}}],\quad \be=[e_{_{ij}}]=\be^{(1)},\qquad r\geq 1\]
%from which it immediately follows that
%\begin{equation}\label{eq:q_ij(x)}
%    Q_{_{ij}}(x)=p_{ij}F_{_{ij}}(x),
%\end{equation}
%or, alternatively, as the Hadamard matrix product $\bQ=\bp\circ\bF$.

%~~~~~~~~~~~~~~~~~~ Accessibility & Communication ~~~~~~~~~~~~~~~~~~~~~~~~~

We next address the properties of communication between the states of $\zeta$. 
In \cite{cinlar_intro_stoch}, the notion of connectedness between states is
given in terms of the underlying counting process of the MRP
$\set{\bm{W}(t)}[t>0]$, where $\bm{W}(t)=[W_j(t)]_{j\in S}$ is a vector-valued random
variable for which $W_j(t)$ counts the number of times during the interval
$(0,\,t]$ that $\zeta$ transitions into state $j$. 
%More formally \[ N_j(t) = \sum_{l:}^{} \text{I}(X_l )\]
If one then defines the
$k\times k$ matrix $\bPsi(t)$ in terms of its $ij^{\text{th}}$ entry
\[\Psi_{ij}(t)=Pr\left( W_j(t)>0 \, \middle| \, X(0)=i \right),\]
then, the accessibility of state $j$ from state $i$, that is, $i\to j$, is 
equivalent to the statement that
\[\Psi_{ij}(\infty)=\lim_{t\to\infty}\Psi_{ij}(t)>0.\]
In other words, the process $\zeta$ will transition from state $i$ to state $j$ 
in finite time with probability 1. Communication between a pair of states $i$ 
and $j$, that is,  $i\to j$ and $j\to i$, will be denoted as $i\tofrom j$. If 
we specify that reflexivity holds for $\tofrom$, that is, every state 
communicates with itself, it can be shown that communication is an equivalence 
relation in the state space $S$. Thus, it follows that $S$ is the disjoint 
union $S=S_1\cup S_2\cup\dots\cup S_K$ of some number $K$ of \emph{essential} 
classes of states whose members communicate exclusively within the class. If 
there exists only one essential class for $\zeta$, then we say that $\zeta$ is 
irreducible. On the other extreme, any state that is the sole member of an 
essential class falls into one of the following categories:
\begin{enumerate}
	\item \emph{Strict Transience}: There is no possibility of returning to the 
	      state once the process leaves it. In other words, if state $j$ is 
	      strictly transient, then
	      \[\Psi_{jj}(+\infty)=0.\]
	\item \emph{Absorption}: A state $j$ is called \emph{absorbing} if it can 
	      only return to itself; that is
	      \[\Psi_{ji}(+\infty)=0,\quad\Psi_{jj}(+\infty)=1,\quad i\neq j.\]
\end{enumerate}
The presence of some $\Psi_{ij}(+\infty)\in(0,\,1)$ indicates that there is some 
state $i'\in S$ for which $i\to i'$, but $i'\nrightarrow j$ ($i'$ may even be 
absorbing). Furthermore, we note that the condition $\Psi_{ii}(+\infty)\in
(0,\,1)$, may be denoted as \emph{non-strict} transience of the state $i$.
Finally, if $\Psi_{ii}(+\infty)=1$, then a return to state $i$ is inevitable, 
and so we must have recurrence of the state $i$. 

This classification of states into recurrent and transient states suggests an 
arrangement of the matrix $\Psi(+\infty)$ in which the component blocks are organized according to the 
normal form $\bP_N$ of the reducible probability matrix $\bP$ of the embedded 
DTMC, which appears as
\begin{equation}\label{eq:canform}
    \bP_N \sim \bPhi\bP\bPhi^{-1} =
    \left(\begin{array}{cccc|cccc}
        \bP_{11}&\bP_{12}&\cdots
            &\bP_{1r}&\bP_{1,r+1}&\bP_{1,r+2}&\cdots &\bP_{1k}\\
        \bz     &\bP_{22}&\cdots
            &\bP_{2r}&\bP_{2,r+1}&\bP_{2,r+2}&\cdots &\bP_{2k}\\
        \vdots&      &\ddots &\vdots&\vdots   &\vdots   &\ddots &\vdots\\
        \bz     &\bz     &\cdots
            &\bP_{rr}&\bP_{r,r+1}&\bP_{r,r+2}&\cdots &\bP_{rk}\\ \hline
        \bz     &\bz     &\cdots &\bz
            &\bP_{r+1,r+1}&\bz      &\cdots &\bz     \\
        \bz     &\bz     &\cdots &\bz     &\bz
            &\bP_{r+2,r+2}&\cdots &\bz     \\
        \vdots&\vdots&\ddots &\vdots&\vdots   &         &\ddots &\vdots\\
            \bz     &\bz     &\cdots &\bz     &\bz
            &\bz          &\cdots &\bP_{kk}\\
    \end{array}\right)
\end{equation}
where $\bPhi$ is a permutation matrix.
The first $r$ blocks along the diagonal correspond to essential classes of 
transient states (both strict and non-strict) while the remaining $k-r$ blocks 
correspond to the recurrent states.

With the relationship between the entries of $\Psi(+\infty)$ and the state 
properties of $\zeta$ having been established, we next introduce yet another 
state-level characteristic that is crucial to determining pairs of states for 
which it is appropriate to compute a first passage moment.
\begin{defn}
    Let $\zeta$ be a SMP with state space $\stspc$. State $j^*\in\stspc$ is 
    said to be \textit{universally accessible} (UA) if, for every $i \in 
    \stspc$, we have $\Psi_{ij^*}(+\infty)=1$.
\end{defn}\noindent
The definition is strong in the sense that, while it is necessary that $i\to 
j^*$ for a UA state $j^*$, it is not \emph{sufficient}. In other words, we must 
achieve certainty with probability 1 that a process starting in any $i\in S$ 
must end up in $j^*$ in a finite period of time. As a matter of terminology, we 
shall refer to a state $j^*$ for which some entries $0\leq\Psi_{ij}(+\infty)<1$
as \emph{sub-UA}. %Although not addressed in this paper, it is possible to reduce a process so a sub-UA state becomes UA in the modified process, and the first passage moments can be found via the modified process (if they existed in the original process).  %trimmed appropriately  As we will see later, it will be possible to achieve UA status for almost any such state through a `trimming' of the state space (if we have $\Psi_{ij}(\infty)=0$ for some $i\in S$) coupled with an appropriate conditioning argument.

It is often convenient to refer to the arrangement of states in a SMP in 
graph-theoretic terms. A \emph{directed graph}, or \emph{digraph}, associated 
to $\bA=[a_{ij}]$, denoted $\grph(\bA)$ is a grouping of \emph{nodes} (states) connected 
by vertices in the set $V(\grph(\bA))=\{1,2,\dots,k\}$ such that the directed 
arc, or edge, $(i,j)$ exists if and only if $a_{ij}>0$. $\grph(\bA)$ is said to 
be \emph{strongly connected} if, for each ordered pair $i,j\in V(\grph(\bA))$, 
there exists a (directed) path in $\grph(\bA)$ from $i$ to $j$. In either case 
of there being an edge or directed path from $i$ to $j$, the implication is 
clearly $i\rightarrow j$. The final connection between irreducibility and 
connectedness is made in the following Proposition:
\begin{prop}
    Let $\bA$ be a nonnegative square matrix. $\bA$ is irreducible if and only 
    if $\grph(\bA)$ is strongly connected.
\end{prop}
\begin{proof}
    See Shao \cite{shao_85}.
\end{proof}

We conclude the section with a Proposition that demonstrates the significance 
of the UA state property, which we assert that, even if it is not greater than 
that of irreducibility, it is evinced at a more fundamental level:
\begin{prop}\label{prop:irr_ua}
    A SMP $\{X(t):t\geq 0\}$ with finite state space $\stspc$ is
    irreducible if and only if every $j\in\stspc$ is UA.
\end{prop}
\noindent The property of a state being UA is, in a sense, the minimal 
requirement for the existence of a vector of finite first-passage moments for that state.

%%%%%%% SMP First Passage %%%%%%%%%

\section{First Passage Times in an Irreducible SMP}

We next address known results for the first passage time moments of an 
irreducible SMP, that is, an SMP for which $\Psi(+\infty)=\bm{E}$ %\boldsymbol{1}_{k\times k}$. 
We may select an arbitrary state $j\in S$ for which we define the random 
variable
\[Y_{_j}=\inf\{t\geq T_1:X(0)=i,\,X(t)=j\},\,j\in\stspc,\]
where $T_1>0$ is the time of the first transition. Random variable $Y_j$ may 
be described as the time of first passage from an initial state $i$ to state 
$j$ if $i\neq j$, and the time of first return to $j$ otherwise. The 
distribution function $G_{ij}(t)$ of first passage, conditioned on being in the 
initial state $i\in\stspc$, is defined as
\[G_{_{ij}}(t)=Pr\left( Y_{_j} \leq t \, \middle| \, X(0)=i\right),\]
and for which the corresponding $r^{\text{th}}$ moments $m_{_{ij}}^{(r)}$, $r\geq 1$, 
if they exist, are given by
\[m_{_{ij}}^{(r)}\equiv \int_0^{\infty} t^r dG_{ij}(t) = E\left[Y_{_j}^r\, \middle | \,X(0)=i \right].\]
We thus define $\bG(t)$ and $\bm{M}^{(r)}=\left[m_{_{ij}}^{(r)}\right]$ to be 
the matrices of first passage cumulative distribution functions and $r^{\text{th}}$ moments, respectively.

As stated in Proposition 5.15 of \cite[pg104]{RossOpt} and Lemma 4.1 of 
\cite{SMPFirstPass}, the moments of first passage for an irreducible SMP may be 
computed as the finite solution to the systems of equations given by
\begin{align}
    \label{eq:mmt_comps1}
    m_{ij}& \equiv m_{_{ij}}^{(1)} =\sum_{l=1}^k p_{_{il}}
        \left[\left(1-\delta_{_{lj}}\right)m_{_{lj}} + m_{_{il}} \right] \\
    \label{eq:mmt_comps2}
    m_{_{ij}}^{(r)} &= \sum_{l=1}^k p_{_{il}} m_{_{il}}^{(r)} + 
        \sum_{s=1}^{r}\binom{r}{s} \left[ \sum_{l \neq j} p_{_{il}}
            e^{(r-s)}_{_{il}}m_{_{lj}}^{(s)}\right],\qquad r\geq 2.
\end{align}
One or both of these formulae can also be found in \cite{hunter1969moments}, \cite{hunter82}, and \cite{hunter2016accurate} and are well known in the literature. 
Clearly, a necessary condition for $m^{(r)}_{_{ij}}< + \infty$ is that 
$i\rightarrow j$, which is certainly true if the SMP is irreducible. In 
contrast, we observe that $G_{_{ij}}(+ \infty)<1$ (and $m_{_{ij}}= +\infty$) 
might occur for a pair of states $i,\,j\in\stspc$ if $i\nrightarrow j$. As we 
will later show, \eqref{eq:mmt_comps1} and \eqref{eq:mmt_comps2} still hold 
under the somewhat weakened assumption of universal accessibility for the 
terminal state $j$.

The recurrence properties of a SMP may be explained in terms of the 
distribution of the first passage of a SMP from a given state $i\in\stspc$ back 
to itself, otherwise known as the time of (first) return to a state 
$i\in\stspc$. The crucial step is to define
\[H_{_{ii}}=Pr \left( N(Y_{_i}) < +\infty \, \middle| \, Z_0=i \right),\]
which is the probability that the number of steps required for the embedded
DTMC $\{X_n:n\geq 0\}$ to return to state $i$ is finite. If $H_{_{ii}}<1$,
then the state $i\in\stspc$ is called \emph{transient}; otherwise, it is known 
as \emph{recurrent}. If, in addition to recurrence, we have $m_{_{ii}}
< +\infty$, then the state is called \emph{positive recurrent}. The SMP itself is 
deemed, recurrent, transient, or positive recurrent as a \emph{process} if the 
corresponding condition holds for every state $i\in\stspc$. For an irreducible 
SMP with a finite state space, it is well-known that the process is 
automatically positive recurrent. This is not true, in general, for a reducible 
process, but may be evaluated on a state-by-state basis.

%------------------------------------------------

\section{First Passage Moments for UA States}

In this section, we derive a formula for determining the first and higher 
moments of first passage times in \emph{reducible} SMPs to single states $j$ 
that are UA. We begin with a technical result that will be needed in the proof 
of Theorem \ref{thm:main} to demonstrate that the matrix formula for the 
moments of first passage to a UA state $j\in\stspc$ is well-defined. For 
notational convenience, define $\bm{e}_{_j}$ to be a vector of length $k$ which contains all zeros except at the $j^{\text{th}}$ position, which is 1.  Also define $\bm{e}$ to be a $k$ length vector of 1s.  %with $\bI(-j)$ to be the identity matrix with the 
%$j$th diagonal element set to zero.  
The proof of the following lemma is given in the Appendix.
\begin{lemma} \label{lemma:nonsing_result}
    Let $\{X(t):t\geq 0\}$ be a SMP with finite state space $\stspc$ and
    embedded DTMC at transition epochs with transition probabilities contained
    within the (stochastic) matrix $\bm{P}$. Then the matrix $\left[\bI-\bP+\bP
    \bm{e}_{_j} \bm{e}_{_j}^T\right]$ is nonsingular if and only if state $j\in\stspc$ is
    universally accessible (UA).
\end{lemma}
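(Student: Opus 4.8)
The plan is to recast the lemma as a statement about the spectral radius of a single matrix and then dispatch the two implications with elementary path and closure arguments. Write $\bA = \bp\,\bI(-j)$; concretely, $\bA$ is $\bp$ with its $j$th column replaced by zeros, so $\bA$ is nonnegative, the $i$th row of $\bA$ sums to $1-p_{ij}\le 1$, and hence $\bA\in\mathcal A$ with $\rho(\bA)\le\|\bA\|_\infty\le 1$. Because $\bA$ is nonnegative, its spectral radius is itself an eigenvalue of $\bA$: placing $\bA$ in the canonical form \eqref{eq:canform}, Proposition \ref{prop:3} gives $\rho(\bA)=\max_{\nu}\rho(\bA_{\nus})$, the maximizing block is either $[0]_{1\times1}$ or irreducible, and in either case its Perron root (equal to $\rho(\bA)$) is an eigenvalue of that block, hence of $\bA$. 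Combined with $\rho(\bA)\le 1$, this shows that $\bI-\bA$ is singular precisely when $\rho(\bA)=1$, so the lemma is equivalent to the claim that $\rho\big(\bp\,\bI(-j)\big)<1$ if and only if $j$ is UA. I would then prove the two directions separately.

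For the ``if'' direction I would use taboo probabilities. Since $\bA$ has a zero $j$th column, so does every power $\bA^n$, and a short induction identifies the $i$th row sum of $\bA^n$ with $\cndpr{Z_1\ne j,\dots,Z_n\ne j}{Z_0=i}$. If $j$ is UA, then for each $i\in\stspc$ there is a simple path in $\grph(\bp)$ from $i$ to $j$ (a simple cycle through $j$ when $i=j$) of some length $n_i$ with $1\le n_i\le m$, so $p^{(n_i)}_{ij}>0$. As the event $\{Z_{n_i}=j\}$ is disjoint from the taboo event for $n=m$, the $i$th row sum of $\bA^m$ is at most $1-p^{(n_i)}_{ij}\le 1-\varepsilon$, where $\varepsilon=\min_{i\in\stspc}\max_{1\le n\le m}p^{(n)}_{ij}>0$. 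Hence $\|\bA^m\|_\infty\le 1-\varepsilon<1$, so $\rho(\bA)^m=\rho(\bA^m)\le\|\bA^m\|_\infty<1$, i.e. $\rho(\bA)<1$ and $\bI-\bA$ is nonsingular. (Alternatively, one can argue through the canonical form: were some irreducible diagonal block $\bA_{\nus}$ stochastic, the corresponding states would form a $\bp$-closed set avoiding $j$, contradicting UA; so each irreducible diagonal block is substochastic, has spectral radius strictly below $1$ by Proposition \ref{prop:2}, and Proposition \ref{prop:3} closes the argument.)

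For the ``only if'' direction I would prove the contrapositive. Suppose $j$ is not UA and put $C=\{i\in\stspc:i\nrightarrow j\}\ne\emptyset$. Every $i\in C$ has $p_{ij}=0$, and $C$ is $\bp$-closed: if $i\in C$, $p_{ik}>0$, and $k\notin C$, then $i\to k\to j$ forces $i\to j$, a contradiction. Thus for $i\in C$ the $i$th row of $\bA$ is supported on $C$ and sums to $\sum_{k\ne j}p_{ik}=1$, so the principal submatrix $\bA_{C,C}$ is stochastic and has $1$ as an eigenvalue (with right eigenvector $\bid$). Permuting the state space so that the states of $C$ come first turns $\bA$ into a block lower triangular matrix with leading block $\bA_{C,C}$, whence $\det(\bI-\bA)=\det(\bI-\bA_{C,C})\,\det(\bI-\bA_{C^{c},C^{c}})=0$, so $\bI-\bp\,\bI(-j)$ is singular, as required.

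The taboo-probability bookkeeping and the closedness of $C$ are routine; the step I expect to demand the most care is the spectral reduction in the first paragraph, namely invoking the canonical form and Perron-Frobenius (Proposition \ref{prop:3}) to justify that, for the possibly reducible nonnegative matrix $\bp\,\bI(-j)$, non-invertibility of $\bI-\bp\,\bI(-j)$ is exactly the assertion $\rho\big(\bp\,\bI(-j)\big)=1$, together with checking that the degenerate case $j\nrightarrow j$ (in which $j$ itself lies in $C$) is covered uniformly by the same arguments.
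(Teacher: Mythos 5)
Your proof is correct, and both directions are argued by genuinely different routes than the paper's, which is worth noting. The shared skeleton is the reduction to "$\bI-\bp\bI(-j)$ is nonsingular iff $\rho(\bp\bI(-j))<1$," which you justify essentially as the paper does (canonical form plus Proposition \ref{prop:3} to see that the Perron root of the possibly reducible nonnegative matrix $\bA$ is an eigenvalue, combined with $\rho(\bA)\le\|\bA\|_\infty\le 1$). From there the paper stays entirely inside the canonical form: for UA $\Rightarrow$ nonsingular it shows each irreducible diagonal block is substochastic and applies Propositions \ref{prop:2} and \ref{prop:3} --- this is exactly your parenthetical alternative --- and for nonsingular $\Rightarrow$ UA it runs a direct block-by-block digraph argument, chaining a path from an arbitrary state through successive diagonal blocks of the canonical form down to state $j$. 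Your taboo-probability estimate replaces the first of these with a self-contained bound $\|\bA^m\|_\infty\le 1-\varepsilon<1$ that needs neither the canonical form nor Proposition \ref{prop:2}; and your contrapositive via the closed set $C=\{i\in\stspc: i\nrightarrow j\}$, together with the block-triangular determinant factorization $\det(\bI-\bA)=\det(\bI-\bA_{C,C})\det(\bI-\bA_{C^c,C^c})=0$, replaces the paper's most delicate step (the chained accessibility argument) with a short closure observation. The points you flag as needing care do hold up: a shortest path or cycle gives $n_i\le m$ so that $\varepsilon=\min_i\max_{1\le n\le m}p^{(n)}_{ij}>0$ over the finite state space; and in the degenerate case $j\in C$ the rows of $\bA$ indexed by $C$ already coincide with the corresponding rows of $\bp$ (since $p_{ij}=0$ there), so $\bA_{C,C}$ is stochastic exactly as claimed. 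One could add that $j\in C$ in fact never occurs unless some other state also fails to reach $j$, since $\bp$ stochastic and $i\rightarrow j$ for all $i\neq j$ force $j\rightarrow j$; but your argument does not need this.
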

\noindent Clearly this lemma is true if all states are UA, as shown in Theorem 3.3 of \cite{hunter2007simple}.  Next, we will derive the closed-form analytical expression for the 
$r^{\text{th}}$ first passage moments $\bm{M}^{(r)}=\left[m_{_{ij}}^{(r)}\right]$, for 
$r\geq 1$ and for any given initial state $i\in\stspc$, given that the terminal 
state $j$ is UA.  These results are recursive, thus for $r>1$, one must have the $(r-1)^{\text{th}}$ moments to calculate the $r^{\text{th}}$ moments.  Although a convenient computational feature is that only one inverse matrix must be calculated for any number of moments.
\begin{thm}\label{thm:main}
    Let $\{X(t):t\geq 0\}$ be a regular time-homogeneous SMP with a finite 
    state space $\stspc$. Further suppose that $j\in\stspc$ is UA. Then the 
    $r^{\text{th}}$ moments of the first passage times from all states $i\in\stspc$ to 
    state $j$ contained in the $k$-vector ($k=|\stspc|$)
    \[\bm{M}_{_j}^{(r)}=\left[m_{_{ij}}^{(r)}\right]_{i=1}^k,\quad r\geq 1,\]
    are solutions to the system of equations given by
    \begin{align}
        \label{eq:1st_mmt}
        \bm{M}_{_j}       &\equiv \bm{M}_{_j}^{(1)} = \left[\bI-\bP+\bP\bm{e}_{_j} \bm{e}_{_j}^T \right]^{-1}
            \big(\bP \circ \bm{N} \big) \bm{e} , \\[1ex]
        \notag
        \bm{M}_{_j}^{(r)} &= \left[\bI-\bP+\bP\bm{e}_{_j} \bm{e}_{_j}^T \right]^{-1} \\
        \label{eq:rth_mmt}
        &\times\Bigg[ \left( \bP \circ \bm{N}^{(r)} \right) \bm{e} +
            \sum_{s=1}^{r-1}\binom{r}{s}
            \left[ \left( \bP \circ \bm{N}^{(r-s)} \right)
            \left( \left( \bm{E} - \bm{I} \right)_j \circ \bm{M}_j^{(s)} \right) \right]
            \Bigg], &\text{if }\;r>1,
    \end{align}
    where $\bm{e}$ is a column vector of ones and the scalar entries 
    $m_{_{ij}}^{(1)}$ and $m_{_{ij}}^{(r)}$ for $r\geq 2$ are defined as in 
    \eqref{eq:mmt_comps1} and \eqref{eq:mmt_comps2}, respectively.
\end{thm}

Before proving Theorem \ref{thm:main}, we will compare
this result to Theorem 5.2 of Hunter \cite{hunter82}.
First, we observe that Theorem \ref{thm:main} may be used
to find the mean first passage times to UA states in a SMP with either
an irreducible or reducible transition probability matrix. Hence this
method may be used in place of Hunter's method, albeit for a single
UA state at a time. In the case of a reducible process with a UA state, and
lacking other available alternatives, this method \emph{must}
be used since a unique stationary vector %$\boldsymbol{\pi}$ 
will not
exist, in which case Hunter's formula is not well-defined.

Theorem \ref{thm:main} is nevertheless similar to Hunter's formula in
that a generalized inverse of $\bm{I}-\bm{P}$ must be computed.
%, but the difficulty may be offset by utilizing any number of available computational algorithms
However, in our proposed method there is no requirement to compute a stationary probability vector.
This advantage may be lost if first-passage moments are required
for many UA states since one must, in each instance, repeat
computations \eqref{eq:1st_mmt} and possibly \eqref{eq:rth_mmt}.  A more in-depth comparison of our Equation \eqref{eq:1st_mmt} to Equation (5.12) of Hunter \cite{hunter82} is included in the appendix (Section \ref{compare}).

% To see this, first define
% the set $U\subseteq S$ of UA states in a SMP and assume that it is
% nonempty. Further assume that this and the remaining recurrent states of
% the process form an irreducible partition $\tilde{\bm{P}}_N$ of the
% normal form $\bm{P}$ of the transition probability matrix of the
% embedded DTMC. 

% the upper right partition of the normal form

% One must first observe that the matrix $\bI-\bP+\bP\bm{e}_{_j} \bm{e}_{_j}^T$ corresponds to the matrix $\bm{I}-\bm{P}$ in
% Equation (2-10) of this source, but restricted to the irreducible partition of $\bm{P}_N$ (i.e., the normal form $\bm{P}$) containing UA states we'll call $\tilde{\bm{P}}_N$. Thus, Theorem 5.2 of \cite{hunter2007simple} directly applies by defining
% the generalized inverse $\bm{G}$ to be the partition of $\bI-\bP+\bP\bm{e}_{_j} \bm{e}_{_j}^T$ corresponding to $\tilde{\bm{P}}_N$.

\begin{proof} (Theorem \ref{thm:main})
We first show, using induction on the $r^{\text{th}}$ moment, $r\geq 1$, that the system 
of equations \eqref{eq:mmt_comps1} and \eqref{eq:mmt_comps2} give a valid 
relationship between the first-passage moments to a given state $j$ that is UA. 
For the mean time of first passage given by the system \eqref{eq:mmt_comps1}, 
we observe the following at the first transition epoch $T_1$ of the SMP:
\begin{enumerate}
	\item $i\nrightarrow l$ at $T_1\;\Rightarrow$ the corresponding $l^{\text{th}}$ term 
	      drops out of the expression, and
	\item $i\rightarrow l$ at $T_1\;\Rightarrow$ $m_{il}$ and $m_{lj}$ are 
	      well-defined, the latter because $j$ is UA.
\end{enumerate}
We thus conclude that a first-step analysis founded upon the state of the SMP 
at the first transition epoch $T_1$ (c.f. Proposition 5.15 of 
\cite[pg104]{RossOpt}) still holds for a terminal UA state $j$. Next, for the 
induction step, we consider expression \eqref{eq:mmt_comps2} for the $(r+1)^{\text{th}}$ 
moment, where $r\geq 1$. We likewise claim that the original renewal argument 
given in Lemma 4.1 of \cite{SMPFirstPass} for the derivation of 
\eqref{eq:mmt_comps2} for the $r^{\text{th}}$ moments of first passage is valid. In order 
to see this, we rewrite, for $i\in\stspc$, expression \eqref{eq:mmt_comps2} as
\begin{equation}\label{eq:r+1_mmt}
    m_{_{ij}}^{(r+1)}=\sum_{l=1}^k p_{_{il}}
        \left[\left(1-\delta_{_{lj}}\right)m_{_{lj}}^{(r+1)} + 
        m_{_{ik}}^{(r+1)}\right] + \sum_{s=1}^{r}\binom{r}{s} \left[ \sum_{l \neq j} p_{_{il}}
            n_{_{il}}^{(r-s)}m_{_{lj}}^{(s)}\right]. 
\end{equation}
%where
%\[W_r=\sum_{s=1}^{r}\binom{r}{s} \left[ \sum_{k \neq j} p_{_{il}}
%            n_{_{il}}^{(r-s)}m_{_{lj}}^{(s)}\right].\]
The inductive hypothesis and items 1) and 2) above guarantee that the last sum in \eqref{eq:r+1_mmt} %$M_r$ 
is 
well-defined while the remaining part is in exactly the same 
form as \eqref{eq:mmt_comps1}, which has just been shown to have a finite 
solution via the base step.

Thus, for arbitrary $i\neq j$, where $i\in\stspc$, we may transform 
\eqref{eq:mmt_comps1} into the equivalent matrix expression
\[ \bm{M} = [m_{_{ij}}] = \bP \big( \left( \bJ - \bI \right)  \circ
   \bm{M} \big) +\big( \bP \circ \bm{N} \big) \bJ \,. \]
In this form we are not able to solve directly for $\bm{M}$, but, under the 
assumption that $j$ is a specific UA state in $\stspc$, it is possible to solve 
for the $j^{th}$ column of $\bm{M}$, which we denote as $\bm{M}_{j}$. We then obtain,
\[ \bm{M}_{j} = \bP \left[ \big( \left( \bJ - \bI \right)
   \circ \bm{M} \big) \right]_{j} + \big( \bP \circ \bm{N} \big)
   \bm{e} \, .\]
Next, we isolate $(\bm{P} \circ \bm{N}) \, \bm{e}$ so that
\[ \bm{M}_{j} - \bP \left[ \big( \left( \bJ - \bI \right)
   \circ \bm{M} \big) \right]_{j} = \big( \bP \circ \bm{N} \big)
   \bm{e} \, .\]
Factoring out $\bm{M}_j$ gives
\[ \left[\bI-\bP+\bP\bm{e}_{_j} \bm{e}_{_j}^T \right]\bm{M}_{j} = \big( \bP
   \circ \bm{N} \big) \bm{e} \, \]
which allows us to finally solve for $\bm{M}_j$ as
\[\bm{M}_{j} = \left[\bI-\bP+\bP\bm{e}_{_j} \bm{e}_{_j}^T \right]^{-1} \big(
 \bP \circ \bm{N} \big) \bm{e}  \,.\]
The previous step is justified since $\bI-\bP+\bP\bm{e}_{_j} \bm{e}_{_j}^T$ is nonsingular (see Lemma \ref{lemma:nonsing_result}).
This proves that \eqref{eq:1st_mmt} is, indeed, well-defined.

A general formula for the $r^{\text{th}}$ moment, where $r\geq 2$, is given in Lemma
4.1 of \cite{SMPFirstPass} as
\[m_{_{ij}}^{(r)} = \sum_{l=1}^{k} p_{_{il}} n_{_{il}}^{(r)} + \sum_{s=1}^{r}
\binom{r}{s} \left[ \sum_{l \neq j} p_{_{il}} n^{(r-s)}_{il} m_{_{lj}}^{(s)}\right],\]
which is expressed in matrix notation as
\begin{equation*}
\bm{M}^{(r)} = \left( \bP \circ \bm{N}^{(r)} \right) \bJ
    + \sum_{s=1}^{r} \binom{r}{s}\left[ \left( \bP \circ \bm{N}^{(r-s)} \right)
    \left( \left( \bJ - \bI \right)\circ \bm{M}^{(s)} \right)  \right] \, .
\end{equation*}
Solving for the $j^{\text{th}}$ column gives
\begin{equation*}
\bm{M}^{(r)}_j = \left( \bP \circ \bm{N}^{(r)} \right) \bm{e}
    + \sum_{s=1}^{r} \binom{r}{s} \left[ \left( \bP \circ \bm{N}^{(r-s)} \right)
    \left( \left( \bJ - \bI \right)\circ \bm{M}^{(s)} \right)_j  \right] .
\end{equation*}
Using $\bm{N}^{(0)}=\bJ$ (the identity under the Hadamard product), we extract the 
$r$th term of the summation to obtain
\begin{align*}
\bm{M}^{(r)}_j - \bP  \left[ \left( \bJ - \bI \right)\circ \bm{M}^{(r)} \right]_j
    = \left( \bP \circ \bm{N}^{(r)} \right) \bm{e} +   \sum_{s=1}^{r-1} \binom{r}{s}
    \left[ \left( \bP \circ \bm{N}^{(r-s)} \right) \left( \left( \bJ - \bI
    \right)\circ \bm{M}^{(s)} \right)_j  \right].
\end{align*}
We further observe that
\[\bm{M}^{(r)}_j - \bP  \left[ \left( \bJ - \bI \right)\circ
  \bm{M}^{(r)} \right]_j = \left(\bI-\bP+\bP\bm{e}_{_j} \bm{e}_{_j}^T \right)\bm{M}_{j}^{(r)},\]
which gives
\begin{equation*}
\left(\bI-\bP+\bP\bm{e}_{_j} \bm{e}_{_j}^T \right)\bm{M}_{j}^{(r)} = \left( \bP \circ \bm{N}^{(r)}\right)
    \bm{e} + \sum_{s=1}^{r-1} \binom{r}{s} \left[ \left( \bP \circ \bm{N}^{(r-s)}
    \right) \left( \left( \bJ - \bI \right)\circ \bm{M}^{(s)} \right)_j  \right].
\end{equation*}
Finally, we solve for $\bm{M}_j^{(r)}$ to obtain
\[\bm{M}_{j}^{(r)} = \left[\bI-\bP+\bP\bm{e}_{_j} \bm{e}_{_j}^T \right]^{-1}
    \Bigg[ \left( \bP \circ \bm{N}^{(r)} \right) \bm{e} + \sum_{s=1}^{r-1}
    \binom{r}{s} \left[ \left( \bP \circ \bm{N}^{(r-s)} \right)
    \left( \left( \bJ - \bI \right)_j \circ \bm{M}_j^{(s)} \right) \right]
    \Bigg] \, .\]
As argued in the proof of formula, \eqref{eq:1st_mmt}, the inverse 
$\left[\bI-\bP+\bP\bm{e}_{_j} \bm{e}_{_j}^T\right]^{-1}$ exists. Hence, \eqref{eq:rth_mmt} is 
also well-defined.
\end{proof}

\noindent We next investigate some statistical aspects, using Theorem
\ref{thm:main}, to estimate the first passage moments to universally accessible
states in a SMP.

%------------------------------------------------

\section{Estimation}

In this section we will derive consistent estimates for first passage
moments in SMPs. Since the SMP $\{X(t):t\geq 0\}$ is time-homogeneous, we assume 
without loss of generality that $X(0)=i\in\stspc$. If we observe the SMP for a 
period of time $T>0$, then, for any $j\in\stspc$, we may then define the point 
estimators $\hat{p}_{ij}$ and $\hat{n}^{(r)}_{ij}$ for the probability and the
$r^{\text{th}}$ moment of the sojourn time of the SMP as it transitions from $i$ to $j$,
respectively. They are defined as follows:
\[\hat{p}_{_{ij}}\equiv\frac{u_{_{ij}}}{\sum_{l\in\stspc}{u_{_{il}}}},\qquad
  \hat{n}^{(r)}_{_{ij}}\equiv\frac{1}{u_{_{ij}}}\sum_{l=1}^{u_{_{ij}}} x_{_{ijl}}^r,
  \qquad i,j\in\stspc,\]
where
\begin{align*}
    u_{_{ij}}  &\equiv \text{ Number of observed transitions from state $i$ to 
        state $j$ by time $T$}, \\
    x_{_{ijl}} &\equiv \text{ $l^{\text{th}}$ observed sojourn time from state $i$ to 
        state $j$ by time $T$}.
\end{align*}
We further assume $T$ is large enough so that at least one transition from 
$i$ to $j$ has been observed; in other words, $u_{_{ij}}\geq 1$. 
In order to make inferential hypotheses using these estimators, it is necessary 
to first show that they are consistent. A point estimator $\hat{\theta}_n$ is 
said to be \emph{consistent} if it converges in probability to the true 
population statistic $\theta$ as the sample size $n$ increases; that is, for 
each $\epsilon>0$,
\[\lim_{n\to\infty}P\left(|\hat{\theta}_n-\theta|<\epsilon\right)=1.\]
This condition is written in shorthand as
\[\hat{\theta}_n\stackrel{P}{\rightarrow}\theta.\]
We now show that this condition holds for the matrix estimators 
$\hat{\bP}\equiv[\hat{p}_{ij}]$ and $\hat{\bm{N}}\equiv[\hat{n}_{ij}]$.
\begin{lemma}
    The matrix estimators $\hat{\bP}$ and $\hat{\bm{N}}$ are consistent.
\end{lemma}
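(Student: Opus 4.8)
The plan is to reduce the assertion to the strong law of large numbers (SLLN) applied to the regenerative structure of the embedded Markov renewal process, and then to pass from almost sure convergence to convergence in probability. Fix a pair $i,j\in\stspc$. Since the transition epochs $\{S_k\}$ are Markov renewal epochs, the successive states entered immediately after each completed sojourn in state $i$ form a sequence of independent and identically distributed multinomial trials on $\stspc$ with cell probabilities $p_{i1},\dots,p_{im}$; in particular, writing $N_i$ for the number of completed sojourns in $i$ observed by time $T$ and $\xi_K$ for the indicator that the $K$th such sojourn is followed by a transition to $j$, the $\xi_K$ are i.i.d.\ $\mathrm{Bernoulli}(p_{ij})$ and $n_{ij}=\sum_{K=1}^{N_i}\xi_K$. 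By the same Markov renewal property, the successive sojourn times $x_{ij1},x_{ij2},\dots$ recorded on $i\to j$ transitions are i.i.d.\ with distribution function $F_{ij}$, hence with common mean $e_{ij}$, which we take to be finite (a standard assumption for a regular SMP, already implicit in Theorem~\ref{thm:main}, where $\be$ appears).

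First I would dispose of the deterministic-index statements: by the SLLN, $\tfrac{1}{N}\sum_{K=1}^{N}\xi_K\to p_{ij}$ almost surely as $N\to\infty$, and $\tfrac{1}{N}\sum_{K=1}^{N}x_{ijK}\to e_{ij}$ almost surely as $N\to\infty$.

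The main obstacle is that the number of observations here is not a fixed sample size but a random quantity generated by the very process being estimated. To handle this I would note that, as the observation horizon grows, $N_i\to\infty$ almost surely for every recurrent state $i$ (hence for all states when the SMP is irreducible), and likewise $n_{ij}\to\infty$ almost surely whenever $p_{ij}>0$. Invoking the elementary fact that $y_N\to y$ almost surely together with $\tau_T\to\infty$ almost surely imply $y_{\tau_T}\to y$ almost surely, I conclude that $\hat{p}_{ij}=n_{ij}/N_i\to p_{ij}$ and $\hat{e}_{ij}=\tfrac{1}{n_{ij}}\sum_{K=1}^{n_{ij}}x_{ijK}\to e_{ij}$ almost surely. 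When $p_{ij}=0$ the estimator $\hat{p}_{ij}$ is identically $0=p_{ij}$, so no argument is required, and the standing assumption $n_{ij}\ge 1$ is needed only so that $\hat{e}_{ij}$ is well defined.

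Finally, almost sure convergence implies convergence in probability, so $\hat{p}_{ij}\stackrel{P}{\rightarrow}p_{ij}$ and $\hat{e}_{ij}\stackrel{P}{\rightarrow}e_{ij}$ for every $i,j\in\stspc$. Because $\stspc$ is finite, entrywise convergence in probability is equivalent to convergence in probability of the matrices in any fixed matrix norm, so $\hat{\bp}$ and $\hat{\be}$ are consistent. The argument applies verbatim to $\hat{\be}^{(r)}$ for any $r\ge 1$ with $e^{(r)}_{ij}<\infty$, since it uses only the SLLN for the i.i.d.\ sequence $\{x_{ijK}^r\}$.
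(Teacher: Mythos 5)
Your argument is essentially the paper's: you reduce each entry to an i.i.d.\ sequence (Bernoulli indicators of the destination of successive departures from state $i$, and the recorded sojourn times on $i\to j$ transitions) and apply a law of large numbers, exactly as the paper does with the weak law. The one genuine difference is that you let the observation horizon $T\to\infty$ and therefore must pass through a random-index argument (SLLN plus $N_i\to\infty$ a.s.), whereas the paper treats the number of observed departures $N$ itself as the sample size tending to infinity, which sidesteps that step entirely. Your extra care buys a cleaner probabilistic statement, but it also exposes a point you should address: the paper's setting is deliberately \emph{reducible}, so some states $i$ may be transient, in which case $N_i$ is a.s.\ finite as $T\to\infty$ and $\hat{p}_{ij}$, $\hat{e}_{ij}$ do not converge at all in your framework; your parenthetical ``hence for all states when the SMP is irreducible'' quietly restricts to a hypothesis the paper does not make. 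To match the lemma as stated you should either adopt the paper's convention that consistency is indexed by the number of observed transitions out of $i$ (so the issue never arises), or state explicitly that consistency under $T\to\infty$ holds only for entries $(i,j)$ with $i$ recurrent. With that caveat your proof is correct, and your closing remarks on the $p_{ij}=0$ case and on $\hat{\be}^{(r)}$ are welcome additions the paper omits.
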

\begin{proof}
    Let $\{V_l\}$ be a sequence of Bernoulli random variables such that $V_l=1$ 
    when a transition from $i$ to $j$ occurs at the $l^{\text{th}}$ transition, and is 0 
    otherwise. Accordingly, if $U>0$ transitions are observed in the time 
    interval $(0,T]$, then the estimated probability of transition from $i$ to 
    $j$ becomes
    \[\hat{p}_{ij}=\frac{1}{U}\sum_{l=1}^U{V_l},\]
    with the following equivalences
    \[U=\sum_{l\in\stspc}{u_{_{il}}},\qquad u_{_{ij}}=\sum_{l=1}^U{V_l}.\]
    The Markov property at transitions of the embedded DTMC of the SMP implies 
    that the $V_l$ are independent and identically distributed (i.i.d.) random 
    variables. Hence, by the Weak Law of Large Numbers (see Theorem 5.5.2 of 
    \cite[p.232]{CasBerg}), we have
    \[\hat{p}_{ij}\stackrel{P}{\rightarrow}p_{_{ij}},\]
    which demonstrates consistency.

    Likewise, we see that the $x_{ijl_1}$ are independent of $x_{ijl_2}$ so 
    long as $l_1\neq l_2$. Thus, the collection $\{x_{ijl}\}_{l=1}^{u_{ij}}$ 
    is i.i.d. By the same reasoning as above, we obtain the convergence in 
    probability
    \[\hat{n}_{ij}\stackrel{P}{\rightarrow}n_{_{ij}},\]
    Hence, the $\hat{n}_{ij}$ are consistent.
\end{proof}
We are now in a position to define the estimators of the $r^{\text{th}}$ moments of first 
passage from state $i$ to state $j\in\stspc$. By replacing $\bP$ and $\bN$ with 
the matrix estimators $\hat{\bP}$ and $\hat{\bN}$, respectively, in formulas 
\eqref{eq:1st_mmt} and \eqref{eq:rth_mmt}, we obtain the natural estimators
\begin{align}
    \label{eq:mu_est1}
    \widehat{\bm{M}}_{j} &\equiv \left[\bI-\widehat{\bP}+\widehat{\bP}\bm{e}_{_j} \bm{e}_{_j}^T
        \right]^{-1} \big( \widehat{\bP} \circ \widehat{\bN} \big) \bJ_{j}, \\[1.5ex]
    \notag
    \widehat{\bm{M}}_{j}^{(r)} &\equiv \left[\bI-\widehat{\bP}+\widehat{\bP}\bm{e}_{_j} \bm{e}_{_j}^T \right]^{-1} \\
    \label{eq:mu_est>1}
                   &\times\Bigg[ \left(\widehat{\bP} \circ \widehat{\bN}^{(r)} \right) 
        \bJ_j + \sum_{s=1}^{r-1}\binom{r}{s} \left[ \left( \widehat{\bP} \circ 
        \widehat{\bN}^{(r-s)} \right) \left(
        \left( \bJ - \bI \right)_j\circ \widehat{\bm{M}}_j^{(s)} \right)  \right]
        \Bigg], \quad r\geq 2.
\end{align}
As expected, estimators \eqref{eq:mu_est1} and \eqref{eq:mu_est>1} are also 
consistent.
\begin{lemma}\label{lemma:mu_consist}
    For a state $j\in\stspc$ that is UA with respect to the digraph 
    $\grph(\widehat{\bP})$, the estimators $\widehat{\bm{M}}_{j}^{(r)}$, $r\geq 1$, 
    are consistent.
\end{lemma}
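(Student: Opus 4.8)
The plan is to establish consistency of $\hat{\bmu}_j^{(r)}$ by induction on $r$, leveraging the previously established consistency of $\hat{\bp}$ and $\hat{\be}^{(r)}$ together with the continuous mapping theorem. The key technical point is that the expressions \eqref{eq:mu_est1} and \eqref{eq:mu_est>1} are obtained from \eqref{eq:1st_mmt} and \eqref{eq:rth_mmt} by substituting the plug-in estimators, so consistency should follow provided the map taking $(\bp,\be^{(1)},\dots,\be^{(r)})$ to $\bmu_j^{(r)}$ is continuous at the true parameter value. The only place continuity could fail is at the matrix inversion $\left[\bI-\bp\bI(-j)\right]^{-1}$, since matrix inversion is continuous only on the open set of nonsingular matrices.

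First I would record that, by the hypothesis that $j$ is UA with respect to $\grph(\hat{\bp})$ and by Lemma \ref{lemma:nonsing_result}, the matrix $\bI-\hat{\bp}\bI(-j)$ is nonsingular, so the estimator is well-defined; similarly $\bI-\bp\bI(-j)$ is nonsingular since $j$ is UA for the true chain. Next, since $\hat{\bp}\stackrel{P}{\rightarrow}\bp$ and matrix multiplication and subtraction are continuous, we have $\bI-\hat{\bp}\bI(-j)\stackrel{P}{\rightarrow}\bI-\bp\bI(-j)$ by the continuous mapping theorem. Because matrix inversion is continuous at the nonsingular matrix $\bI-\bp\bI(-j)$, another application of the continuous mapping theorem yields $\left[\bI-\hat{\bp}\bI(-j)\right]^{-1}\stackrel{P}{\rightarrow}\left[\bI-\bp\bI(-j)\right]^{-1}$.

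For the base case $r=1$: the Hadamard product $\hat{\bp}\circ\hat{\be}$ converges in probability to $\bp\circ\be$ (continuity of element-wise multiplication), and right-multiplication by the fixed vector $\bJ_j=\bid$ is continuous, so $(\hat{\bp}\circ\hat{\be})\bid\stackrel{P}{\rightarrow}(\bp\circ\be)\bid$. Combining this with the convergence of the inverse above, and using that the product of two sequences converging in probability converges in probability to the product of the limits, we conclude $\hat{\bmu}_j\stackrel{P}{\rightarrow}\bmu_j$. For the inductive step, assume $\hat{\bmu}_j^{(s)}\stackrel{P}{\rightarrow}\bmu_j^{(s)}$ for all $1\le s\le r-1$. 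Each summand in the bracket of \eqref{eq:mu_est>1} is a continuous function of $\hat{\bp}$, $\hat{\be}^{(r-s)}$, and $\hat{\bmu}_j^{(s)}$, all of which converge in probability to their true counterparts; the finite sum of such terms, plus $(\hat{\bp}\circ\hat{\be}^{(r)})\bid$, therefore converges in probability to the corresponding true quantity. Multiplying by the convergent inverse gives $\hat{\bmu}_j^{(r)}\stackrel{P}{\rightarrow}\bmu_j^{(r)}$, completing the induction.

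The main obstacle — really the only subtlety beyond routine bookkeeping — is justifying the continuity of matrix inversion at the relevant point, which requires knowing a priori that both $\bI-\bp\bI(-j)$ and (with probability one, or at least for $T$ large) $\bI-\hat{\bp}\bI(-j)$ are nonsingular; this is exactly what Lemma \ref{lemma:nonsing_result} supplies once we invoke the UA hypothesis for $\grph(\hat{\bp})$. One should also note in passing that $\hat{\be}^{(r)}$ is consistent for $\be^{(r)}$ for every $r\ge 1$ by the identical i.i.d.\ argument used in the preceding lemma for $\hat{\be}=\hat{\be}^{(1)}$, since each $x_{ijK}^r$ is an i.i.d.\ sequence in $K$ with mean $e^{(r)}_{ij}$.
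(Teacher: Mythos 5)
Your proof is correct and follows essentially the same route as the paper's: plug in the consistent estimators $\hat{\bp}$ and $\hat{\be}^{(r)}$ and invoke continuity of the resulting map, with Lemma \ref{lemma:nonsing_result} supplying the nonsingularity needed for matrix inversion to be continuous at the true parameter. If anything, your version is more careful than the paper's one-paragraph argument, since you make explicit the induction on $r$ forced by the recursive dependence of $\hat{\bmu}_j^{(r)}$ on $\hat{\bmu}_j^{(s)}$ for $s<r$, and you note that consistency of $\hat{\be}^{(r)}$ for $r\geq 2$ requires (and receives) the same i.i.d.\ argument the paper gives only for $\hat{\be}^{(1)}$.
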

The assumption that $j$ is UA with respect to $\grph(\widehat{\bP})$ as 
stated in Lemma \ref{lemma:mu_consist} addresses a possible issue when 
estimating first passage moments in that the process defined by $\widehat{\bP}$. 
This concerns the possibility that, due to numerical issues or lack of 
sufficient observations, the estimated probabilities $\widehat{\bP}$ might
suggest that state $j$ is \emph{not} UA. If this is the case, then Lemma 
\ref{lemma:mu_consist} may not be used to estimate the first passage moments 
to state $j$. One possible workaround is to replace the anomalous zero 
probabilities with small positive values, then proceed with procedures outlined 
earlier. This may, of course, introduce inestimable inaccuracies into the 
computation. Another approach would be to simply delete states that become 
disconnected from $j$, but, again, the same concerns with regard to obtaining 
an accurate estimate would potentially arise.

%------------------------------------------------

\section{Example}
We give an example of an SMP and show how the first passage moments can be
estimated.  Therefore, given the process depicted in Figure
\ref{fig:RevIllDeathModel3} we have 3 transition distributions and a
probability $p$.  We will calculate the first passage moments using the direct
transition moments, $\bm{N}$.

\begin{figure}[ht]
\begin{center}
\includegraphics[width=4in]{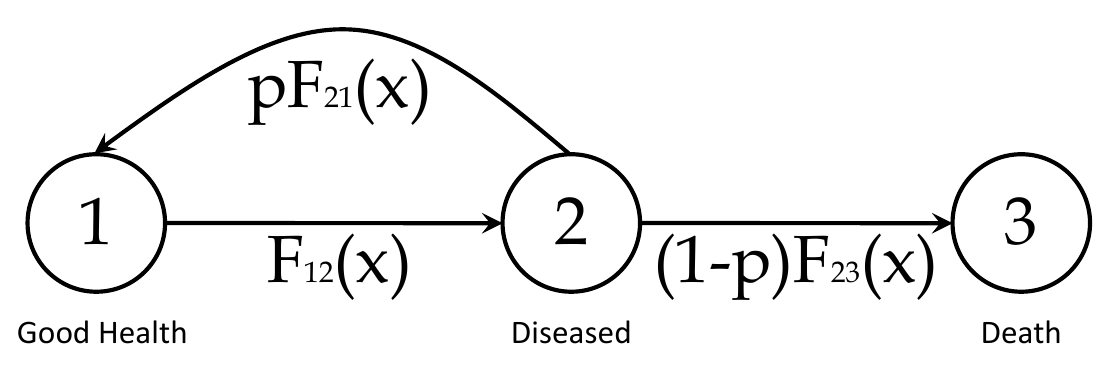}
\caption{A SMP representation of the progress of a medical patient.}
\label{fig:RevIllDeathModel3}
\end{center}
\end{figure}

We begin the procedure with
\begin{equation*}
\bP = \left[ \begin{array}{ccc}  0&1&0\\ p&0&1-p\\ 0&0&1\\ \end{array}
    \right]\text {  and  }
\bN = \left[ \begin{array}{ccc}  0&n_{12}&0\\ n_{21}&0&n_{23}\\ 0&0&n_{33}
    \\ \end{array} \right].
\end{equation*}
The expected first passage vector $\bm{M}_3$ to State 3 is then obtained as
\begin{eqnarray*}
\bm{M}_3 &=&
\left( \left[ \begin{array}{ccc}  1&0&0\\ 0&1&0\\ 0&0&1\\ \end{array}
    \right] -
\left[ \begin{array}{ccc}  0&1&0\\ p&0&0\\ 0&0&0\\ \end{array}
    \right]\right)^{-1}
\left(  \left[ \begin{array}{ccc}  0&n_{12}&0\\ p \, n_{21}&0& (1-p)
    \,n_{23}\\ 0&0&n_{33}\\ \end{array} \right] \right)
\left[ \begin{array}{c}  1\\ 1\\ 1\\ \end{array} \right]  \\
&=& \frac{1}{1-p}\left[ \begin{array}{ccc}  1&1&0\\ p&1&0\\ 0&0&1-p\\
                        \end{array} \right]
                 \left[ \begin{array}{c}  n_{12}\\ p \, n_{21} + (1-p) \,
                            n_{23}\\ n_{33} \\
                        \end{array} \right] \\
&=& \frac{1}{1-p}\left[ \begin{array}{c}  n_{12} + p \, n_{21} + (1-p) \,
                            n_{23}\\                                p \,
                            n_{12} + p \, n_{21} + (1-p) \, n_{23} \\
                            (1-p) \, n_{33} \\
                        \end{array} \right] \\
\end{eqnarray*}
Looking closely at these values we can see they make sense.  As $p$ gets
small we can see $m_{13} \rightarrow  n_{12} + n_{23}$ and $m_{23}
\rightarrow n_{23}$.  This simple example demonstrates the theory discussed
earlier and how for even large systems find the first passage moments is only
constrained by the computational burden of computing the inverse of
$\bI-\bP+\bP\bm{e}_{_j} \bm{e}_{_j}^T$.

If numbers are provided then numerical computer programs can handle these
types of problems with ease.  Now suppose we have
\begin{equation*}
\bP = \left[ \begin{array}{ccc}  0&1&0\\ 0.8&0&0.2\\ 0&0&1\\
             \end{array}\right] \text {  and  }
\bN = \left[ \begin{array}{ccc}  0&6&0\\ 0.7&0&1.1\\ 0&0&0\\
             \end{array}
\right].
\end{equation*}
We get the following result
\begin{equation*}
\bm{M}_3 = \left[ \begin{array}{c}  33.9\\ 27.9\\ 0\\ \end{array} \right].
\end{equation*}
The R-code for this example is included in the appendix.  We feel the
methods presented in this paper provide a comprehensive method to determine
the first passage moments of a SMP.

\section{Conclusion}
In this paper we have devised an exact time-domain approach to derive the
moments $m_{_{ij}}$ of first passage time distributions in \emph{reducible}
SMPs, given that the terminal state $j$ fulfills the conditions of universal
accessibility. We note that the solution method presented here may be used to 
find the first passage moments of an \emph{irreducible} SMP as well. This has 
the advantage of requiring the simultaneous solution of first passage moments to 
only \emph{single} UA states $j$, rather than to all states, thereby reducing 
the computational load, particularly for large SMPs. We have also demonstrated 
the existence of consistent point estimators for the first passage moments of 
processes that may be modeled as SMPs.

\section{Acknowledgements}
The authors thank the editors and reviewers for providing feedback which improved the work.

%------------------------------------------------------------------------------
%	Appendix
%------------------------------------------------------------------------------

\section{Appendix}

\subsection{Properties of a Stochastic Matrix}

The Perron-Frobenius theorem adapted to finite-dimensional irreducible and 
nonnegative matrices is very useful for characterizing the set of eigenvalues 
of such matrices. As we will see later, the theory may be (indirectly) extended 
to even reducible nonnegative matrices by leveraging their distinctive normal 
form. Let $\bA\in\Rp^{k\times k}$ for some positive integer $k$. We define the 
\emph{spectrum} of $\bA$, denoted $\sigma_{\bA}$, to be the set of its 
eigenvalues. Its \emph{spectral radius}, denoted $\rho(\bA)$, is given by
\[\rho(\bA)=\max\{|\lambda|:\lambda \in \sigma_{\bA} \}\in\Rp,\]
which indicates the maximum radius of the disc that contains $\sigma_{\bA}$ in 
the complex plane. Of particular interest is the case of a finite-dimensional 
\emph{stochastic matrix} $\bA$, which is a nonnegative square matrix such that 
$\bA \, \bm{e}=\bm{e}$, where $\bm{e}$ is a column vector of ones. Perron-Frobenius 
theory, via Proposition \ref{prop:3} for the reducible case, implies that the 
spectral radius is likewise an eigenvalue of $\bA$, denoted the \emph{Perron 
root of} $\bA$. Stochastic matrices comprise the boundary of the unit ball 
$\mathcal{A}=\{\bA\in\Rp^{k\times k}:||\bA||_{\infty}\leq 1\}$ of 
finite-dimensional nonnegative matrices in the normed linear space induced by 
the \emph{infinity norm} $||\cdot||_{\infty}$, which is given by the maximum 
absolute row sum of $\bA=[a_{_{ij}}]$, or
\[||\bA||_{\infty}=\max_i\sum_{j=1}^k{|a_{_{ij}}|}=\max(A\,\bm{e}).\]
As the next Proposition will show, we may classify certain elements of
$\bA\in\mathcal{A}$ with spectral radius $\rho(\bA)<1$ as \emph{substochastic}, 
which is to say that $0<\min(\bA\,\bm{e})<1$.

\begin{prop} \label{prop:1}
Suppose that $\bA\in\mathcal{A}$. If $\rho(\bA) < 1$, then $\bA$ is 
substochastic.
\end{prop}
\begin{proof}
    Clearly, since $\bA\in\mathcal{A}$, it must be either stochastic or
    substochastic. Therefore the only thing that must be proved is that $\bA$ 
    is not stochastic. Assume $\bA$ is stochastic; i.e. $\bA\,\bm{e} = \bm{e}$. 
    This implies $1$ is an eigenvalue, which contradicts $\rho(\bA) < 1$. 
    Therefore, $\bA$ must be substochastic.
\end{proof}

\noindent For an irreducible nonnegative matrix $\bA$, it is, in fact,
sufficient for $\bA$ to have a spectral radius that is strictly less than
unity in order to be substochastic, as the next Proposition shows.

\begin{prop} \label{prop:2}
    If $\bA\in\Rp^{k\times k}$ is an irreducible substochastic matrix, then 
    $\rho(\bA) < 1$.
\end{prop}
\begin{proof}
    See Theorem 7 in \cite{kolpakov1983matrix}.
\end{proof}

The following Proposition relates the spectral radius of the sub-blocks of the
matrix in normal form to that of the entire matrix.

\begin{prop} \label{prop:3}
    Suppose $\bA\in\Rp^{k\times k}$ is a reducible matrix in normal form. 
    Then $\rho(\bA)=\max_{\nu} \rho(\bA_{\nus})$ for $1 \leq \nu \leq K$.
\end{prop}
\begin{proof}
    See Lemma 1 in \cite[pg. 303]{kincaid2002numerical} with an additional
    induction argument to get the result or as argued in 
    \cite[pg.115]{fiedler2008special}.
\end{proof}

%~~~~~~~~~~ Nonsingularity of Reduced Fundamental Matrix ~~~~~~~~~~~~~~~~~~~

\subsection{Proof of Lemma \ref{lemma:nonsing_result}}

\begin{proof}
We begin with the observation that, since $\bA=[\bA_{\nu\gk}]=\bP-\bP \bm{e}_{_j} \bm{e}_{_j}^T $ is
formed by setting each element of the $j^{\text{th}}$ column of $\bP$ to 0, we
essentially remove all directed arcs $(i,j)$ in the digraph $\grph(\bP)$
for each $i\in V(\grph(\bP))$ in order to produce $\grph(\bA)$.
This means that $\grph(\bA)$ cannot be strongly connected, and thus
$\bA$ must be reducible. We may therefore assume that $\bA$ is in canonical
form \eqref{eq:canform}. Furthermore, because the $j^{\text{th}}$ column is zero, we
will assume without loss of generality that the canonical form of $\bA$
corresponds to the particular ordering of the states in $\stspc$ in which
state $j$ is re-designated as state $1$. We impose the same permutation and
partitioning on $\bP=[\bP_{\nu\gk}]$ so that
\begin{equation}\label{eq:A_p_comp}
    \bA_{\nu\gk}=\begin{cases}
               \bP_{\nu\gk} & \text{if }(\nu,\gk)\in\{1,\dots,K\}\times
                   \{2,3,\ldots,K\}, \\
               \bz        & \text{if }(\nu,\gk)\in\{1,\dots,K\}\times\{1\},
           \end{cases}
\end{equation}
where, as in \eqref{eq:canform}, $K$ is the dimension of $\bA$. 
Notice that since $\bP$ may be irreducible, the above does not necessarily
imply that $\bP$ can be put in canonical form, but rather is element-wise
equivalent to $\bA$, save for the first column, which, unlike that of $\bA$, may 
contain positive entries. Stated succinctly, we have that
\[\bz=\bA_{\nu 1}\leq\bP_{\nu 1},\qquad\nu=1,\dots,K.\]

Assume that $\left[\bI-\bA \right]$ is nonsingular, which directly implies
that $1\notin\sigma_{\bA}$; that is, $1$ is not an eigenvalue of $\bA$. Since
$\bP$ is a row-stochastic matrix, and because of the equivalence given in
\eqref{eq:A_p_comp}, the Gerschgorin Circle Theorem (see \cite[Eqn. 
7.1.13]{meyer2000matrix}) indicates that the spectral radius 
$\delta=\rho(\bA)\leq 1$. Furthermore, the nonnegativity of $\bA$ permits the 
use of Equation 8.3.1 of \cite{meyer2000matrix} to then assert that the Perron 
root $0\leq\delta\leq 1$ exists. However, since we have shown that $1\notin
\sigma_{\bA}$, it must then be the case that $\delta<1$. This implies by 
Proposition \ref{prop:3} that $\rho(\bA_{\nus})<1$ for all  $\nu \in 
\{1,\dots,K\}$ and hence, by Proposition \ref{prop:2}, each diagonal block 
$\bA_{\nus},\;\nu\in\{1,\dots,K\}$ must be substochastic.

We now consider the $\nu$th diagonal block in the canonical form of $\bA$,
where $\nu\in\{2,\dots,K\}$, and proceed to show that each state $i$ associated 
to the vertex set $V(\grph(\bA_{\nus}))$ can access state $1$. Because $\bP$ 
is a row-stochastic matrix and $A_{\nus}$ is substochastic, either or both of 
the following may hold:
\begin{enumerate}
    \item $\bP_{\nu 1} \neq \bz$,  or
    \item $\bA_{\nu\gk} \neq \bz$ for some $\gk > \nu$.
\end{enumerate}
For 1), $\bP_{\nu 1} \neq \bz$ indicates the existence of states $i_{\nu}\in 
V(\grph(\bA_{\nus}))$ (with $i_{\nu}=i$ possible, but not necessary) and $1\in 
V(\grph(\bA_{1 1}))$ for which there is a directed arc $(i_{\nu},1)$. Moreover, 
the irreducibility of $A_{\nus}$ gives a directed path from $i$ to $i_{\nu}$. 
We thus obtain
\[i\rightarrow i_{\nu}\rightarrow 1. \]
In other words, there is a directed path from $i$ to 1.

If 2) holds, there exists a directed arc from some state $i_{\nu}\in 
V(\grph(\bA_{\nus})$ (again, with the possibility that $i_{\nu}=i$) to a state 
$i_{\gk}\in V(\grph(\bA_{\gks}))$. From here, we are again confronted with 
choices 1) and 2). If 1) holds, then the previous argument gives us a directed 
path from $i_{\gk}$ to 1. Since the irreducibility of $\bA_{\nus}$ implies the 
existence of a path from $i$ to $i_{\nu}$, we have the accessibility chain
\[i\rightarrow i_{\nu}\rightarrow i_{\gk}\rightarrow 1,\]
and we are done. Otherwise, we proceed to the next diagonal block following 
$\bA_{\gks}$ and continue until we reach a state $i_K\in V(\grph(\bA_{KK}))$ in 
the last diagonal block $\bA_{KK}$. The only choice here, due to the this block 
being substochastic, is 1); that is, $\bP_{K1}\neq\bz$, for which we have 
already demonstrated the existence of the connection $i_K\rightarrow 1$.
Each of the preceding paths may then be combined to form a single directed
path from an arbitrarily selected $i\in V(\grph(\bA_{\nus}))$ to $1$ so that
\[i\rightarrow i_{\nu}\rightarrow i_{\gk}\rightarrow\dots\rightarrow
  i_K\rightarrow 1.\]
Thus, state $1$ is UA.

For the reverse implication, we will assume that state $1$ is UA, and then
proceed to show that $\left[\bI-\bA\right]$ is nonsingular. The reducibility of
$\bA$ allows us to assume that it possesses canonical form and, furthermore,
that each submatrix on the diagonal of the canonical matrix corresponding to
$\bA$ is irreducible or zero. Consider an arbitrary nonzero, and hence
irreducible, diagonal submatrix $\bA_{\nus}$ for some $\nu\in\{2,\dots,K\}$
(recall that $\bA_{11}=\bz$ by definition of $\bA$). By the assumption that
state $1$ is UA, there must be a directed path from each state in the vertex
set $V(\grph(\bA_{\nus}))$ to 1, which in turn implies that $\bA_{\nus}$
is substochastic. By Proposition \ref{prop:2}, $\rho(\bA_{\nus})<1$. Using
this fact, and the fact that the spectral radii of the zero submatrix blocks
are 0, we may invoke Proposition \ref{prop:3}, to state that $\rho(\bA)<1$.
Hence, $\left[\bI-\bA\right]$ is nonsingular, which completes the proof.
\end{proof}

\subsection{Comparing Equation \eqref{eq:1st_mmt} with Previous Results} \label{compare}

To compare our main result, Equation \eqref{eq:1st_mmt}, with Equation (5.12) of Hunter \cite{hunter82} we start by stating Equation (5.12) in our notation.
\begin{align} \label{eq:hunter5.12}
\bm{M} = \Big[ & \bm{G}\left(\bm{P} \circ \bm{N} \right) \bm{e} \bm{\pi}^{T} - \bm{E} \left(\bm{I} \circ \left( \bm{G}\left(\bm{P} \circ \bm{N} \right) \bm{e} \bm{\pi}^{T} \right)  \right) + \nonumber \\ & \bm{\pi}^{T} \left( \bm{P} \circ \bm{N}  \right) \bm{e} \left( \bm{I}-\bm{G}+ \bm{E} \left( \bm{I} \circ \bm{G} \right)  \right)   \Big] \left[ \bm{I} \circ \left( \bm{e} \bm{\pi}^T \right) \right]^{-1}.
\end{align}
\noindent In Equation \eqref{eq:hunter5.12}, $\bm{G}$ represents any generalized inverse of $\bm{I}-\bm{P}$ and $\bm{\pi}$ is the stationary distribution of the process.  However, in our methodology we require a specific generalized inverse of $\bm{I}-\bm{P}$, which is 
\begin{equation} \label{geninveq}
\left[\bI-\bP+\bP\bm{e}_{_j} \bm{e}_{_j}^T \right]^{-1}.
\end{equation}
In our methodology we only find the first passage moments from all states to state $j$ (assuming $j$ is UA).  Using Equation \eqref{eq:hunter5.12} one must assume an irreducible process. 

It can be shown if $\bm{G}$ is a generalized inverse of $\bm{I}-\bm{P}$ in the form of Equation \eqref{geninveq}, the $j^{\text{th}}$ column of the matrix
\begin{equation*}
- \bm{E} \left(\bm{I} \circ \left( \bm{G}\left(\bm{P} \circ \bm{N} \right) \bm{e} \bm{\pi}^{T} \right)  \right) + \bm{\pi}^{T} \left( \bm{P} \circ \bm{N}  \right) \bm{e} \left( \bm{I}-\bm{G}+ \bm{E} \left( \bm{I} \circ \bm{G} \right)  \right)
\end{equation*}
is identically zero.  Removing those terms, substituting Equation \eqref{geninveq} for $\bm{G}$, and eliminating all but the $j^{\text{th}}$ column of Equation \eqref{eq:hunter5.12} yields 
\begin{align*} 
\bm{M}_j &= \left(\left[ \left[\bI-\bP+\bP\bm{e}_{_j} \bm{e}_{_j}^T \right]^{-1}\left(\bm{P} \circ \bm{N} \right) \bm{e} \bm{\pi}^{T}   \right] \left[ \bm{I} \circ \left( \bm{e} \bm{\pi}^T \right) \right]^{-1}\right)_j \\
 & = \left[\bI-\bP+\bP\bm{e}_{_j} \bm{e}_{_j}^T \right]^{-1}\left(\bm{P} \circ \bm{N} \right) \bm{e}.
\end{align*}
This last equation is identical to our Equation \eqref{eq:1st_mmt}.  This demonstrates how our methodology ties into the previous literature, which assumes an irreducible process.

\subsection{R-Code for Example}

\begin{verbatim}
P <- matrix(c(0,1,0,.8,0,.2,0,0,1),ncol=3,nrow=3,byrow=T)
N <- matrix(c(0,6,0,.7,0,1.1,0,0,0),ncol=3,nrow=3,byrow=T)
I <- diag(3)
e3 <- c(0,0,1)
e <- matrix(1,ncol=1,nrow=3)
m3 <- solve(I-P+P%*%e3%*%t(e3))%*%(P*N)%*%e
m3
\end{verbatim}

\newpage

%------------------------------------------------------------------------------
%	REFERENCE LIST
%------------------------------------------------------------------------------

\bibliographystyle{amsplain}
\bibliography{mybib}

%------------------------------------------------------------------------------

%\end{multicols}

\vspace{.2in} 

\noi 
{\bf Richard L. Warr} \\
Brigham Young University\\
223 TMCB\\
Provo, UT 84602\\
E-mail: \href{mailto:richard.L.warr@gmail.com}{richard.L.warr@gmail.com} \\

\vspace{.1in}
  
\noi 
{\bf James D. Cordeiro}\\  
University of Dayton\\
300 College Park\\
Dayton, OH 45469\\
E-mail: \href{mailto:jcordeiro1@udayton.edu}{jcordeiro1@udayton.edu}  \\

\end{document}